\newtheorem{theorem}{Theorem}[section]
\newtheorem{lemma}[theorem]{Lemma}
\newtheorem{corollary}[theorem]{Corollary}
\newcommand{\RR}{\mathbb R}
\newcommand{\eps}{\varepsilon}
\begin{document}\suppressfloats
\title{Generalised Mycielski graphs and the Borsuk--Ulam theorem}
\author{Tobias M\"uller}
\thanks{The first author was partially supported by NWO grants 639.032.529
and 612.001.409. Part of this work was carried out while this  author visited
Laboratoire G-SCOP supported by ANR Project Stint (ANR-13-BS02-0007) and
LabEx PERSYVAL-Lab (ANR-11-LABX-0025-01).}
\address{Johann Bernoulli Institute, Groningen University, The Netherlands}
\email{tobias.muller@rug.nl}
\author{Mat\v ej Stehl\'ik}
\thanks{The second author was partially supported by ANR project Stint
(ANR-13-BS02-0007), ANR project GATO (ANR-16-CE40-0009-01), and by LabEx
PERSYVAL-Lab (ANR-11-LABX-0025). Part of this work was carried out while this
author visited Utrecht University, supported by NWO grant 639.032.529.}
\address{Laboratoire G-SCOP, Univ.\ Grenoble Alpes, France}
\email{matej.stehlik@grenoble-inp.fr}

\begin{abstract}
  Stiebitz determined the chromatic number of generalised
  Mycielski graphs using the topological method of Lov\'asz,
  which invokes the Borsuk--Ulam theorem. Van Ngoc and Tuza
  used elementary combinatorial arguments to prove Stiebitz's
  theorem for $4$-chromatic generalised Mycielski graphs, and
  asked if there is also an elementary combinatorial proof for
  higher chromatic number. We answer their question by showing
  that Stiebitz's theorem can be deduced from a version of Fan's
  combinatorial lemma. Our proof uses topological terminology,
  but is otherwise completely discrete and could be rewritten to
  avoid topology altogether. However, doing so would be somewhat
  artificial, because we also show that Stiebitz's theorem is
  equivalent to the Borsuk--Ulam theorem.
\end{abstract}
\maketitle

\section{Introduction}

The Mycielski construction~\cite{Myc55} is one of the earliest and
arguably simplest constructions of triangle-free graphs of
arbitrary chromatic number. Given a graph $G=(V,E)$, we let
$M_2(G)$ be the graph with vertex set $V \times \{0,1\} \cup \{z\}$,
where there is an edge $\{(u,0),(v,0)\}$ and $\{(u,0),(v,1)\}$ whenever
$\{u,v\} \in E$, and an edge $\{(u,1),z\}$ for all $u \in V$.
It is an easy exercise to show that the chromatic number increases
with each iteration of $M_2(\cdot)$.

The construction was generalised by Stiebitz~\cite{Sti85} (see
also~\cite{SS89,GJS04}), and independently by Van Ngoc~\cite{Van87} (see also~\cite{VNT95}),
in the following way. Given a graph $G=(V,E)$
and an integer $r \geq 1$, we define $M_r(G)$
as the graph with vertex set $V \times \{0,\ldots,r-1\} \cup \{z\}$,
where there is an edge $\{(u,0),(v,0)\}$ and $\{(u,i),(v,i+1)\}$ whenever
$\{u,v\} \in E$, and an edge $\{(u,r-1),z\}$ for all $u \in V$.
The construction is illustrated in Figure~\ref{fig:gen-mycielski}.

\begin{figure}
\begin{center}
\begin{tikzpicture}[scale=0.8,
                    vertex/.style={circle, draw=black, fill, inner sep=0mm, minimum size=4pt},
                    edge/.style={semithick}]
\foreach \i in {0,...,4}
{
  \draw (-1,\i-2) coordinate (a\i) {}
        (0,\i-2) coordinate (b\i) {}
        (1,\i-2) coordinate (c\i) {};
}
\draw (2,0) coordinate (z){};

\foreach \i/\j in {0/1,1/2,2/3,3/4,4/0,1/0,2/1,3/2,4/3,0/4}{
  \draw[edge] (a\i)--(b\j);
  \draw[edge] (b\i)--(c\j);
}
\draw[edge] (a0)--(a1)--(a2)--(a3)--(a4)
            (a0)..controls (-2,-1) and (-2,1)..(a4);

\foreach \i in {0,...,4}
{
  \draw[edge] (z)--(c\i) {};
}

\foreach\i in {0,...,4}
{
  \draw (a\i) node[vertex] {};
  \draw (b\i) node[vertex] {};
  \draw (c\i) node[vertex] {};
}
\draw (z) node[vertex]{};

\end{tikzpicture}
\end{center}
\caption{The graph $M_3(C_5) \cong M_3(M_2(K_2))\in \mathcal M_4$.}\label{fig:gen-mycielski}
\end{figure}
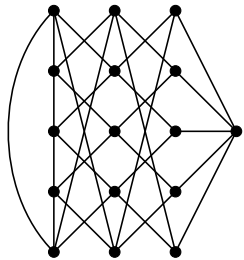

If $r>2$, it is no longer true that the chromatic number increases
with each iteration of $M_r(\cdot)$. For instance,
it can be shown that if $\overline{C_7}$ is the complement of the
$7$-cycle, then $\chi(M_3(\overline{C_7}))=\chi(\overline{C_7})=4$.
However, Stiebitz~\cite{Sti85} was able to show that the chromatic
does increase with each iteration of $M_r(\cdot)$ if we start with
an odd cycle, or some other suitably chosen graph.
For every integer $k \geq 2$, let us denote by $\mathcal M_k$ the
set of all `generalised Mycielski graphs' obtained from $K_2$ by
$k-2$ iterations of $M_r(\cdot)$, where the value of $r$ can
vary from iteration to iteration. Stiebitz~\cite{Sti85} (see
also~\cite{GJS04,Mat03}) proved the following.

\begin{theorem}[Stiebitz~\cite{Sti85}]
\label{thm:stiebitz}
  If $G \in \mathcal M_k$, then $\chi(G) \geq k$.
\end{theorem}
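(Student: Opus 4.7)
My plan is to prove by induction on $k$ that every $G \in \mathcal{M}_k$ carries, inside its box complex (or an analogous $\mathbb{Z}/2$-equivariant simplicial complex), a centrally symmetric triangulation of the sphere $S^{k-1}$, from which the chromatic bound $\chi(G) \geq k$ follows via a version of Fan's combinatorial lemma. The base case $k = 2$ is $G = K_2$, whose box complex is a $4$-cycle --- already a centrally symmetric triangulation of $S^1$. For the inductive step, let $G = M_r(H)$ with $H \in \mathcal{M}_{k-1}$ and $r \geq 1$. The inductive hypothesis provides a centrally symmetric triangulation of $S^{k-2}$ associated to $H$, and the central combinatorial task is to show that $M_r$ implements, at the level of these complexes, a $\mathbb{Z}/2$-equivariant suspension: the $r$ layers $V(H) \times \{0, \ldots, r-1\}$ together with the apex $z$ should yield a triangulated $S^{k-1}$ inside the box complex of $M_r(H)$, with the sphere from $H$ appearing as an equator and the apex $z$ (together with its $\mathbb{Z}/2$-counterpart arising from antipodal identification) serving as the two poles. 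The edges $\{(u,i),(v,i+1)\}$ of $M_r(H)$ are precisely what allow the intermediate layers to triangulate the two cone halves, uniformly in $r$.

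Given such a centrally symmetric triangulation of $S^{k-1}$, any hypothetical proper $(k-1)$-coloring $c$ of $G$ would induce an antipodal labeling $\lambda$ of the sphere with values in $\{\pm 1, \ldots, \pm(k-1)\}$ --- for instance, setting $\lambda(v,0) = -c(v)$ and $\lambda(v,1) = +c(v)$ in a box-complex formulation --- and the properness of $c$ guarantees that no edge of the triangulation receives complementary labels $\{+i,-i\}$. Fan's lemma applied to $S^{k-1}$ then requires at least $k$ distinct absolute values among the labels, contradicting the available $k-1$, so no such coloring exists.

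The principal obstacle is the explicit construction of the centrally symmetric $S^{k-1}$ inside the box complex of $M_r(H)$, together with the verification of $\mathbb{Z}/2$-equivariance and functoriality through iterations of $M_r$ with varying values of $r$. A secondary subtlety is the asymmetric role of the apex $z$: since $z$ has no natural antipode in $V(M_r(H))$, its antipode must arise from the box-complex construction (where $z$ yields vertices $(z,0)$ and $(z,1)$), and the whole setup must be arranged so that Fan's lemma can be applied cleanly to the resulting triangulation.
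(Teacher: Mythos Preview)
Your plan is essentially the paper's proof. Both arguments inductively construct a centrally symmetric triangulated sphere from the iterated Mycielski structure and then derive the chromatic bound from Fan's lemma; the suspension step you flag as the ``principal obstacle'' is exactly what the paper imports as Theorem~\ref{thm:mycielski-projective} (Kaiser--Stehl\'ik). The one substantive discrepancy is dimensional: the paper works with a properly $2$-coloured triangulation of $S^{k-2}$ (so that $G\cong G(K,\kappa)$), and with only $k-1$ label values on $S^{k-2}$ Fan's lemma alone gives no contradiction; instead the paper proves Corollary~\ref{cor:fan}, using that every top simplex is bichromatic to force a complementary edge, which then witnesses a monochromatic edge of $G$ under $c$. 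Your $S^{k-1}$ is the once-suspended picture (equivalently the $B_0$-type rather than the $B$-type box complex, which is indeed $C_4$ for $K_2$), and there Fan applies directly without the corollary. One caution: for $k\ge 3$ the complex $B_0(G)$ is not itself a simplicial sphere (each side already spans a full $(|V|-1)$-simplex), so you must still exhibit an honest triangulated $S^{k-1}$, aligned with hemispheres as required by the Prescott--Su version of Fan; producing that triangulation, not merely a homotopy equivalence, is precisely the content of the Kaiser--Stehl\'ik result the paper cites, one dimension down.
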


Stiebitz's proof is based on Lov\'asz's~\cite{Lov78} bound on the chromatic
number in terms of the connectedness of the neighbourhood complex, which
Lov\'asz developed to prove Kneser's conjecture (see~\cite{Mat03} for a
comprehensive account). Lov\'asz's bound uses the following result of
Borsuk~\cite{Bor33}, usually known in the literature as the
\emph{Borsuk--Ulam theorem}.
\begin{theorem}[Borsuk~\cite{Bor33}]
\label{thm:borsuk-ulam}
  There exists no continuous antipodal mapping $f:S^n \to S^{n-1}$; that is,
  a continuous mapping such that $f(-x)=-f(x)$ for all $x \in S^n$.
\end{theorem}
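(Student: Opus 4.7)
Theorem~\ref{thm:borsuk-ulam} is classical and is usually cited rather than reproved; nonetheless, in the combinatorial spirit of what follows in the paper, here is the plan I would adopt. I would deduce it by contradiction from Tucker's lemma, the antipodal sibling of Sperner's lemma and a close relative of the Fan's lemma that this paper will invoke later.

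Suppose, for contradiction, that a continuous antipodal map $f: S^n \to S^{n-1}$ exists. Fix an antipodally symmetric triangulation $T$ of $S^n$ of small mesh $\delta$, obtained by iterated barycentric subdivision of the boundary of the cross-polytope. To each $v \in V(T)$ I would assign the label $\lambda(v) = \mathrm{sign}(f(v)_i) \cdot i \in \{\pm 1, \ldots, \pm n\}$, where $i$ is the smallest index with $|f(v)_i| = \|f(v)\|_\infty$; antipodality of $f$ then makes $\lambda$ antipodal, i.e.\ $\lambda(-v) = -\lambda(v)$. Tucker's lemma would produce an edge $\{u,v\}$ of $T$ with $\lambda(u) = -\lambda(v)$, thus placing $f(u)$ and $f(v)$ in two closed subsets of $S^{n-1}$ (the positive and negative halves of the $i$-th ``dominant coordinate'' set) that are separated by a strictly positive distance depending only on $n$. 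Choosing $\delta$ smaller than the uniform modulus of continuity of $f$ corresponding to that distance then delivers the contradiction.

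The main obstacle is that $\lambda$ need not be well-defined or stable when two or more coordinates of $f(v)$ tie in absolute value, which could let $\lambda$ jump discontinuously between adjacent vertices and spoil the final step. The standard remedy is to replace $f$ first by a small antipodal piecewise-linear approximation, making all such ties generic, and to break any remaining ties antipodally. Tucker's lemma itself is the real combinatorial content and would have to be supplied separately; but it belongs to precisely the family of discrete antipodal tools (together with Fan's lemma) that the rest of this paper exploits, so the reduction is both natural and self-contained once that lemma is granted.
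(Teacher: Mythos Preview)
Your argument is essentially correct---indeed, the deduction of Borsuk--Ulam from Tucker's lemma via dominant-coordinate labels is classical. (Your worry about ties is in fact a non-issue: the smallest-index rule makes $\lambda$ perfectly well defined, and the only fact you use afterwards is that $\lambda(u)=+i$ forces $f(u)_i\geq 1/\sqrt{n}$ while $\lambda(v)=-i$ forces $f(v)_i\leq -1/\sqrt{n}$, giving $\|f(u)-f(v)\|\geq 2/\sqrt{n}$ regardless of ties. No perturbation or PL approximation is needed.)

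However, the paper proves Theorem~\ref{thm:borsuk-ulam} by an entirely different route, and the difference is the whole point of the section. The paper's aim in Section~4 is to show that \emph{Stiebitz's theorem implies Borsuk--Ulam}, thereby establishing the equivalence advertised in the title and abstract. Concretely, given a hypothetical antipodal $f:S^n\to S^{n-1}$, the paper invokes Lemma~\ref{lem:BG} to produce a graph $G\in\mathcal M_{n+2}$ embedded in $S^n$ with edges nearly antipodal; pushing this through $f$ realises $G$ as a subgraph of the Borsuk graph $BG(n-1,\alpha_0)$, which is $(n+1)$-colourable, contradicting $\chi(G)\geq n+2$ from Theorem~\ref{thm:stiebitz}. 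Your proof instead derives Borsuk--Ulam from Tucker's lemma, a statement already known to be equivalent to Borsuk--Ulam; this is a clean self-contained argument but it does not touch Stiebitz's theorem at all, and so misses the structural message the paper is trying to convey.
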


To this day, no combinatorial proof of Theorem~\ref{thm:stiebitz} is known
(see~\cite[pp.~133]{Mat03}), except for the case $k=4$~\cite{VNT95}. At the
end of their paper, Van Ngoc and Tuza~\cite{VNT95} propose the following problem:
\begin{quote}\small
  Finally, we would like to invite attention to the problem that no elementary
  combinatorial proof is known so far for the general form of Stiebitz's theorem,
  yielding graphs of arbitrarily large chromatic number and fairly large odd girth.
\end{quote}

The answer to the problem depends on the interpretation of `elementary combinatorial
proof'. Does it mean a proof that is `discrete' and does not rely on continuity?
Or does it mean a `graph theoretic' proof which avoids any topological concepts,
such as triangulations of spheres? 

In this note we will give a new discrete proof of Theorem~\ref{thm:stiebitz} based
on a generalisation, due to Prescott and Su~\cite{PS05}, of a classical lemma of
Fan~\cite{Fan52}, and on a result of Kaiser and Stehl\'ik~\cite{KS15}. Since the
proofs of both these theorems are discrete, this provides a discrete proof of
Theorem~\ref{thm:stiebitz}.

Triangulations of spheres are central to our proof, and rewriting the proof
so as to avoid any topological concepts (as Matou\v sek~\cite{Mat04} has done for the
Lov\'asz--Kneser theorem) is certainly possible, but seems somewhat artificial.
Indeed, we show that Theorem~\ref{thm:borsuk-ulam} follows fairly easily from
Theorem~\ref{thm:stiebitz}.

We would like to point out that our proof of Theorem~\ref{thm:stiebitz}
leads to a new proof of Schrijver's~\cite{Sch78} sharpening of the Lov\'asz--Kneser
theorem~\cite{Lov78}, via the following result of Kaiser and Stehl\'ik~\cite{KS15}
(whose proof is entirely combinatorial). For a definition of $SG(n,k)$, we refer
the reader to~\cite{KS15} or~\cite{Mat03}.

\begin{theorem}[Kaiser and Stehl\'ik~\cite{KS15}]
  For all integers $k \geq 1$ and $n > 2k$, there exists a graph $G \in \mathcal M_k$
  homomorphic to $SG(n,k)$.
\end{theorem}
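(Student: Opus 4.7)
My plan is to construct, for each $k$ and each $n > 2k$, both a graph in the generalised Mycielski hierarchy and an explicit homomorphism from it into $SG(n,k)$, by induction on $n$ with $k$ held fixed. The base case is essentially free: for $n = 2k+1$, the Schrijver graph $SG(2k+1, k)$ is isomorphic to the odd cycle $C_{2k+1}$, which is itself a generalised Mycielski graph since $C_{2k+1} = M_k(K_2)$; the identity then serves as the desired homomorphism, and this graph sits in the appropriate Mycielski family.

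Two general tools drive the induction. First, the generalised Mycielski construction is \emph{functorial on graph homomorphisms}: any $\phi: H \to H'$ extends canonically to $M_r(\phi): M_r(H) \to M_r(H')$ via $(v, i) \mapsto (\phi(v), i)$ with the apex vertex fixed. Second, to promote a homomorphism $G \to SG(n,k)$ (with $G$ in the current Mycielski family) to a homomorphism $M_r(G) \to SG(n+1, k)$ (with $M_r(G)$ one level higher), it suffices to construct a single intrinsic homomorphism $\Phi_r: M_r(SG(n, k)) \to SG(n+1, k)$ for some $r \geq 2$; the composition $M_r(G) \to M_r(SG(n,k)) \to SG(n+1, k)$ then completes the inductive step.

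The construction of $\Phi_r$ is the heart of the argument. I would exploit the cyclic $\mathbb Z_n$-symmetry of Schrijver graphs: cyclic shifts of $[n]$ permute stable $k$-subsets and, although they are not graph automorphisms of $SG(n,k)$, they control how disjointness relations behave under small perturbations. Concretely, I would map a vertex $(S, i) \in V(M_r(SG(n, k)))$ to a stable $k$-subset of $[n+1]$ obtained from $S$ by a shift-and-insert procedure: the level index $i$ would dictate how the new element $n+1$ is woven into $S$ (possibly after a cyclic shift), while the apex $z$ is sent to a canonical stable $k$-subset containing $n+1$. Checking that $\Phi_r$ preserves edges then reduces to verifying that pairs of shifted stable sets remain disjoint after the insertion, for each of the three edge types of the Mycielski graph (within a level, between consecutive levels, and between the top level and the apex).

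The hard part will be selecting $r$ and designing the insertion rule so that the edge conditions survive at the ``wrap-around'' of the cyclic order on $[n]$, where the new element interacts nontrivially with the stability constraint of $C_{n+1}$ and where stable sets whose elements cluster near positions $1$ and $n$ are most easily spoiled. This boundary analysis is where I expect the delicate combinatorial work to lie. Once $\Phi_r$ is in hand, the induction runs cleanly from the odd-cycle base case up through $SG(n,k)$ for every $n > 2k$, producing at each step a graph in the appropriate generalised Mycielski family, as required.
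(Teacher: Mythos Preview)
The paper does not prove this theorem at all; it is quoted as a result of Kaiser and Stehl\'ik and invoked only to point out that the combinatorial proof of Stiebitz's theorem given here also yields a proof of Schrijver's theorem. There is thus no ``paper's own proof'' to compare your proposal against.

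Two remarks on the proposal itself. First, there is an indexing mismatch between what you are proving and the statement as printed. Your induction produces, for each $n>2k$, a graph in $\mathcal M_{n-2k+2}$ (you start from $C_{2k+1}\in\mathcal M_3$ at $n=2k+1$ and climb one level in the Mycielski hierarchy per unit increase in $n$), not a graph in $\mathcal M_k$. The printed statement appears to be a typo: to deduce $\chi(SG(n,k))\ge n-2k+2$ from Stiebitz's theorem one needs $G\in\mathcal M_{n-2k+2}$, which is what your scheme targets and what is actually proved in the cited reference.

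Second, your outline correctly isolates the crux---everything reduces to exhibiting, for some $r$, a homomorphism $\Phi_r:M_r(SG(n,k))\to SG(n+1,k)$---but you do not construct $\Phi_r$. You offer only a ``shift-and-insert'' heuristic and explicitly defer the wrap-around verification as ``the hard part''. That step is where the entire content of the theorem lies, and it is not routine. (A small side error: cyclic shifts of $[n]$ \emph{are} automorphisms of $SG(n,k)$, contrary to what you write, since they preserve both stability in $C_n$ and disjointness.) As it stands your proposal is an accurate skeleton of an inductive argument, with the functoriality of $M_r$ and the reduction to $\Phi_r$ set up correctly, but the one nontrivial construction is left undone.
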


\section{Preliminaries}

Our graph theoretic terminology is standard and follows~\cite{BM08}.
For an excellent introduction to topological methods in combinatorics, and all the
topological terms used in this paper, see~\cite{Mat03}.

Prescott and Su~\cite{PS05} introduced flags of hemispheres to prove a slight generalisation
of Fan's combinatorial lemma~\cite{Fan52}. 
A \emph{flag of hemispheres} in $S^n$ is a sequence $H_0 \subset \cdots \subset H_n$
where each $H_d$ is homeomorphic to a $d$-ball, $\{H_0,-H_0\}$ are antipodal points,
$H_n \cup -H_n = S^n$, and for $1 \leq d \leq n$,
\[
  \partial H_d = \partial(-H_d) = H_d \cap -H_d = H_{d-1}\cup -H_{d-1} \cong S^{d-1}.
\]

The \emph{polyhedron} $|K|$ of a simplicial complex $K$ is defined as the union of all
of its simplices. We say that $K$ is a \emph{triangulation} of $|K|$ (or any space
homeomorphic to it).
A triangulation $K$ of $S^n$ is (centrally or antipodally) \emph{symmetric} if $\sigma \in K$ whenever
$-\sigma \in K$. A symmetric triangulation $K$ of $S^n$ is said to be \emph{aligned with
hemispheres} if we can find a flag of hemispheres such that for every $d$,
there is a subcomplex of the $d$-skeleton of $K$ that triangulates $H_d$.

Given a simplicial complex $K$ and a labelling (map)
$\lambda : V(K) \to {\mathbb Z}\setminus \{0\}$, we say a 
$d$-simplex $\sigma \in K$ is \emph{positive alternating} if it has labels
$\{+j_0,-j_1,+j_2, \ldots, (-1)^dj_d\}$, where $0<j_0<j_1< \cdots <j_d$.
The following version of Fan's lemma~\cite{Fan52} is a key ingredient of our proof.

\begin{theorem}[Prescott and Su~\cite{PS05}]
\label{thm:fan}
  Let $K$ be a symmetric triangulation of $S^n$ aligned
  with hemispheres, and let $\lambda:V(K)\to \{\pm1, \ldots, \pm k\}$
  be a labelling such that $\lambda(-v)=-\lambda(v)$ for every vertex
  $v \in V(K)$, and $\lambda(u)+\lambda(v) \neq 0$ for every edge $\{u,v\} \in K$. 
  Then there exists an odd number of
  positive alternating $n$-simplices. In particular, $k \geq n+1$.
\end{theorem}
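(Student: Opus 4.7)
My approach is induction on $n$, via a parity (door-and-room) argument in the style of Tucker's lemma and the classical proof of Fan's lemma. The base case $n=0$ is immediate: $S^0=\{H_0,-H_0\}$ consists of two antipodal vertices carrying opposite labels $\pm j$, and exactly one of them, the positively labelled one, is a positive alternating $0$-simplex.

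For the inductive step, I restrict to $H_n$ and declare an $n$-simplex of $H_n$ a \emph{room} and a positive alternating $(n-1)$-face a \emph{door}. The combinatorial core is a case analysis of how many doors a room can carry, based on the sign pattern of its labels when sorted by absolute value. One verifies that the door count is $1$ precisely when the room is itself positive alternating or negative alternating, equals $2$ when its sorted sign pattern alternates except for a single consecutive repeat, and is $0$ in every other case.

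Counting door--room incidences modulo~$2$, using that an interior door of $H_n$ lies in two $n$-simplices of $H_n$ while a door on $\partial H_n$ lies in only one, yields
\[
   a_n + b_n \equiv c_{n-1} \pmod 2,
\]
where $a_n$ and $b_n$ count the positive and negative alternating $n$-simplices of $H_n$, and $c_{n-1}$ counts the positive alternating $(n-1)$-simplices of $\partial H_n$. The antipodal hypothesis $\lambda(-v)=-\lambda(v)$ induces a bijection between positive alternating $n$-simplices of $-H_n$ and negative alternating $n$-simplices of $H_n$, so $a_n + b_n$ is precisely the total number of positive alternating $n$-simplices of $K$. Since $\partial H_n \cong S^{n-1}$ inherits a symmetric triangulation aligned with the restricted flag $H_0 \subset \cdots \subset H_{n-1}$ and the restricted labelling still satisfies the hypotheses of the theorem, the inductive assumption delivers $c_{n-1}$ odd, and hence the total is odd. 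The bound $k \geq n+1$ is automatic, since a single positive alternating $n$-simplex already exhibits $n+1$ distinct positive values drawn from $\{1,\ldots,k\}$.

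The main obstacle is the door-count case analysis: one must check that, under the edge hypothesis $\lambda(u)+\lambda(v)\neq 0$ (which forbids opposite-sign equal-absolute-value pairs of vertices within a simplex), no sign pattern other than the three listed above contributes more than one door, and that repeated labels within a simplex create no extra cases once a tie-breaking convention is fixed. The remaining ingredients --- that alignment with hemispheres makes $\partial H_n$ a symmetric triangulated sphere aligned with the truncated flag, and that a boundary $(n-1)$-simplex lies in exactly one $n$-simplex of $H_n$ --- are a direct unpacking of the definitions.
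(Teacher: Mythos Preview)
The paper does not prove Theorem~\ref{thm:fan}; it is quoted as a result of Prescott and Su and used as a black box, with only the remark that their proof ``is constructive and discrete.'' Your sketch is precisely the Prescott--Su argument: induction on $n$ using the flag, with a Tucker-style door--room parity count on $H_n$ that reduces the parity of positive alternating $n$-simplices in $K$ to that of positive alternating $(n-1)$-simplices on $\partial H_n\cong S^{n-1}$. The identities you state (one door for a fully alternating room, two for a single consecutive repeat, zero otherwise; interior doors counted twice, boundary doors once; the antipodal bijection matching positive alternating simplices of $-H_n$ with negative alternating simplices of $H_n$) are exactly the ingredients of their proof, and your congruence $a_n+b_n\equiv c_{n-1}\pmod 2$ is the right summary.

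One small caution on the case analysis: the clean trichotomy you describe holds verbatim when all $n+1$ labels in a room have distinct absolute values. When two vertices of a room carry the \emph{same} label (same sign and magnitude, which the edge hypothesis does not forbid), the room has no positive alternating $(n-1)$-face at all, since any such face needs $n$ distinct absolute values. You allude to this with the tie-breaking remark, but it is worth stating explicitly rather than folding it into ``no extra cases,'' since a reader might otherwise worry that repeated labels could produce an odd door count.
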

We remark that the proof
in~\cite{PS05} is constructive and discrete, and that Fan's original
result~\cite{Fan52} imposes a more restrictive condition on the triangulation.

Suppose $K$ is a symmetric triangulation of $S^n$. A $2$-colouring
of $K$ is an assignment of two colours (black and white) to the vertices
of $K$. The $2$-colouring is said to be \emph{antisymmetric} if antipodal
vertices receive distinct colours, and it is \emph{proper} if no
$n$-simplex is monochromatic.

Given a symmetric triangulation $K$ of $S^n$ and a proper antisymmetric
$2$-colouring $\kappa$ of $K$, we denote by $\tilde G(K,\kappa)$ the graph
obtained from the $1$-skeleton $K^{(1)}$ by deleting all monochromatic
edges. If $\nu$ denotes the antipodal action on $\tilde G(K,\kappa)$,
we set $G(K,\kappa)=\tilde G(K,\kappa)/\nu$, and let
$p:\tilde G(K,\kappa) \to G(K,\kappa)$ be the corresponding projection.
Note that the graph $\tilde G(K,\kappa)$ is a bipartite double cover of
$G(K,\kappa)$.

The following theorem is an immediate consequence of
\cite[Lemma~3.2 and Theorem~6.1]{KS15}, where the results are stated
in terms of so-called quadrangulations of projective spaces.
\begin{theorem}[Kaiser and Stehl\'ik~\cite{KS15}]
\label{thm:mycielski-projective}
  Given $n\geq 1$, let $K$ be a symmetric triangulation of $S^n$ aligned with hemispheres, with a proper
  antisymmetric $2$-colouring $\kappa$.
  For any $r \geq 1$, there exists a symmetric triangulation $K'$ of $S^{n+1}$
  aligned with hemispheres,
  with a proper antisymmetric $2$-colouring $\kappa'$ such that
  $G(K',\kappa')\cong M_r(G(K,\kappa))$.
\end{theorem}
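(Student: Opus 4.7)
My plan is to describe $K'$ explicitly and then verify the required properties. Since the theorem is cited as an immediate consequence of \cite[Lemma~3.2 and Theorem~6.1]{KS15}, the work is really that of reformulating the construction of~\cite{KS15} (phrased there in terms of quadrangulations of projective spaces) in the language of symmetric triangulations with proper antisymmetric $2$-colourings used in this note.

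I would construct $K'$ as a refinement of the suspension $\Sigma K$: take $r$ parallel copies of $|K|$ placed symmetrically about the equator of $S^{n+1}$, together with two new apex vertices $N$ and $S$ at the poles. Combinatorially, $V(K') = V(K) \times \{1,\ldots,r\} \cup \{N,S\}$, and the antipodal involution is $\nu((v,i)) = (-v, r+1-i)$, $\nu(N)=S$. The simplices of $K'$ come in three types: cones from $N$ over the top layer $V(K)\times\{1\}$, cones from $S$ over the bottom layer $V(K)\times\{r\}$, and $(n+1)$-dimensional prism simplices spanning pairs of consecutive layers. The prism subdivision must be $\nu$-equivariant, which can be arranged by choosing a $\nu$-equivariant linear order on $V(K)$; such an order exists since the antipodal action on $V(K)$ is free.

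Set $\kappa'((v,i))=\kappa(v)$, $\kappa'(N)=+1$ and $\kappa'(S)=-1$. Antisymmetry of $\kappa'$ is immediate from $\kappa(-v)=-\kappa(v)$. Properness reduces to properness of $\kappa$: every top simplex of $K'$ is either a prism simplex whose projection to $V(K)$ is a non-monochromatic $n$-simplex of $K$, or a cone from $N$ or $S$ over such a simplex, hence contains both colours. Hemisphere alignment follows by extending the flag aligning $K$, starting from $H'_0 = \{N\}$ and building each $H'_d$ as a subcomplex extending $H'_{d-1}$ by layer vertices and the portion of the prisms aligned with $H_{d-1}$.

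The main obstacle, and the content of \cite[Lemma~3.2 and Theorem~6.1]{KS15}, is to verify that $G(K',\kappa')\cong M_r(G(K,\kappa))$. After the antipodal quotient, within-layer edges give the intra-layer edges of the Mycielski base layer (each layer of $K'$ is identified with its antipodal partner), diagonal prism edges give the Mycielski inter-layer edges, and the apex edges give the Mycielski apex edges. The bookkeeping needed to match the $r$ layers of $K'$ with the $r$ Mycielski layers after the antipodal identification is delicate; it is carried out most transparently in the dual quadrangulation language of~\cite{KS15}, where the colour structure is built into the combinatorics of the cellular complex, and translating those results back yields the desired $K'$ and $\kappa'$.
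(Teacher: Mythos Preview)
The paper does not prove this theorem at all: it is quoted from~\cite{KS15} with only the remark that it is ``an immediate consequence of \cite[Lemma~3.2 and Theorem~6.1]{KS15}.'' So there is no proof in the paper to compare against, and your proposal is already more than the paper offers. That said, since you chose to sketch the construction, a few points in your sketch are off and would cause trouble if you tried to make it precise.

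First, there is no ``$\nu$-equivariant linear order'' on $V(K)$: equivariance would force $v<-v$ and $-v<v$ simultaneously. What one needs is an order that $\nu$ \emph{reverses}, and in fact one that is also compatible with the colouring (e.g.\ all black vertices preceding all white ones). Without this extra compatibility the prism diagonals produce unwanted edges.

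Second, your identification of edge types is inverted. With $\kappa'((v,i))=\kappa(v)$, a bichromatic edge \emph{within} a layer joins a black orbit to a white orbit, which under the natural bijection becomes an \emph{inter}-layer edge of $M_r(G)$; conversely, a bichromatic prism diagonal $(u,i)$--$(v,i+1)$, once you pass to orbits via $(v,i+1)\sim(-v,r-i)$, joins two orbits of the same colour and yields a \emph{base-layer} edge. If the diagonal goes the wrong way (white in layer $i$ to black in layer $i+1$) you get an illegal edge inside a non-base Mycielski layer, which is why the triangulation must be chosen with care rather than ``arranged'' by any equivariant order.

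So the construction you outline is the right shape, and your closing acknowledgement that the bookkeeping is delicate and is carried out properly in~\cite{KS15} is exactly the point; but the specific details you commit to (the order, and which edges of $K'$ become which edges of $M_r(G)$) would need correcting before the sketch could stand on its own.
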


\section{A combinatorial proof of Theorem~\ref{thm:stiebitz}}

Our proof of Theorem~\ref{thm:stiebitz} is based on the following
corollary of Theorem~\ref{thm:fan}.

\begin{corollary}
\label{cor:fan}
  Let $K$ be a symmetric triangulation of $S^n$ aligned
  with hemispheres, and let $\lambda:V(K)\to \{\pm1, \ldots, \pm (n+1)\}$
  be a labelling such that $\lambda(-v)=-\lambda(v)$ for every vertex
  $v \in V(K)$, and every $n$-simplex has vertices of both signs. Then
  there exists an edge $\{u,v\} \in K$ such that $\lambda(u)+\lambda(v) = 0$.
\end{corollary}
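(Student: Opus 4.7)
The plan is to prove Corollary~\ref{cor:fan} by contradiction, via Theorem~\ref{thm:fan}. Assume $\lambda(u)+\lambda(v)\neq 0$ for every edge $\{u,v\}\in K$. Then $\lambda$ satisfies the hypotheses of Fan's lemma with $k=n+1$, producing an odd (hence nonzero) number of positive alternating $n$-simplices, each with label set exactly $\{+1,-2,+3,\ldots,(-1)^n(n+1)\}$. Note that such a simplex has vertices of both signs (for $n\geq 1$), so it is consistent with---and does not immediately contradict---the remaining hypothesis of the corollary.

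To derive a contradiction, the plan is to construct an antipodal labelling $\mu:V(K)\to\{\pm1,\ldots,\pm n\}$ still satisfying $\mu(u)+\mu(v)\neq 0$ for every edge, thereby violating Fan's lower bound $k\geq n+1$. One sets $\mu(v)=\lambda(v)$ whenever $|\lambda(v)|\leq n$, and for each vertex $v$ with $\lambda(v)=n+1$ picks a value $\mu(v)\in\{1,\ldots,n\}$ (with $\mu(-v)=-\mu(v)$) avoiding the ``forbidden'' set $F_v=\{-\lambda(w):w\sim v,\ |\lambda(w)|\leq n,\ \lambda(w)<0\}$. The edges of $K$ with both endpoints having $|\lambda|\leq n$ trivially inherit the non-complementarity property from $\lambda$, and edges with both endpoints carrying $|\lambda|=n+1$ must have labels of the same sign (the opposite-sign case being ruled out by the no-complementary-edge assumption), hence also cause no trouble.

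The main obstacle is to show that $F_v$ is a proper subset of $\{1,\ldots,n\}$ at every vertex $v$ with $\lambda(v)=n+1$, which is needed for a valid choice of $\mu(v)$ to exist. This is precisely where the ``every $n$-simplex has vertices of both signs'' hypothesis should enter the argument: since $\lambda(v)>0$ and no neighbour of $v$ can have label $-(n+1)$, the condition that every $n$-simplex containing $v$ contains a negative vertex translates into a combinatorial restriction on the link of $v$ (a triangulation of $S^{n-1}$) that ought to prevent all of $\{-1,\ldots,-n\}$ from being realised as neighbour labels. Making this precise, and checking antipodal consistency between the assignments at $v$ and $-v$, is the technical heart of the argument. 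Should this reduction strategy encounter serious obstacles, an alternative route is a parity argument that pairs positive alternating $n$-simplices via a combinatorial involution---for instance, pairing $\sigma$ with the unique other $n$-simplex sharing the $(n-1)$-face obtained by removing the $(+1)$-labelled vertex of $\sigma$---to show their count is even, contradicting Fan's oddness conclusion.
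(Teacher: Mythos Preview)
Your main strategy has a genuine gap. The claim that $F_v\subsetneq\{1,\ldots,n\}$ for every vertex $v$ with $\lambda(v)=n+1$ does not follow from the hypothesis that every $n$-simplex has vertices of both signs. That hypothesis only tells you that every $(n-1)$-simplex in the link of $v$ contains a negatively labelled vertex (since $v$ itself is positive and $-(n+1)$ cannot occur among its neighbours); it places no upper bound on \emph{which} of the values $-1,\ldots,-n$ appear in the link. For a concrete local obstruction, take $n=2$, $\lambda(v)=3$, and let the link of $v$ be a $4$-cycle whose vertices carry labels $-1,-2,-1,-2$ in cyclic order: every triangle through $v$ has both signs, there is no complementary edge, and yet $F_v=\{1,2\}=\{1,\ldots,n\}$. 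Your fallback pairing argument also fails as stated: if you delete the $(+1)$-labelled vertex from a positive alternating $n$-simplex $\sigma$ and pass to the other $n$-simplex $\sigma'$ across that facet, nothing forces the new vertex to carry label $+1$. For $n\geq 2$ the remaining facet already contains both signs, and the no-complementary-edge condition still leaves several label values available, so $\sigma'$ need not be positive alternating and the ``pairing'' is not an involution on the set of positive alternating simplices.

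The paper's proof bypasses all of this with a single sign twist: set $\mu(v):=(-1)^{|\lambda(v)|}\lambda(v)$. This $\mu$ is still antipodal and still has no complementary edges (since $\mu(u)=-\mu(v)$ implies $\lambda(u)=-\lambda(v)$), so Theorem~\ref{thm:fan} applies to $\mu$ and yields an odd number of positive alternating $n$-simplices. Because $k=n+1$, any such simplex uses every absolute value $1,\ldots,n+1$ exactly once, with the sign of the label of absolute value $m$ determined by $m$; undoing the twist then shows that its $\lambda$-labels all have the same sign, directly contradicting the hypothesis. The twist is precisely the device that converts ``positive alternating for $\mu$'' into ``monochromatic in sign for $\lambda$'', which is the missing link your argument was searching for.
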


\begin{proof}
  Let $K, \lambda$ be as in the corollary and suppose, for the sake of contradiction, that
  $\lambda(u)+\lambda(v) \neq 0$ for every edge $\{u,v\} \in K$. We now define a new labelling
  $\mu:V(K) \to \{\pm 1, \ldots, \pm (n+1)\}$ by $\mu(v)=(-1)^{|\lambda(v)|}\lambda(v)$.
  Observe that
  \[
    \mu(-v)=(-1)^{|\lambda(-v)|}\lambda(-v)=-(-1)^{|\lambda(v)|}\lambda(v)=-\mu(v),
  \]
  and if $\mu(u)=-\mu(v)$, then $\lambda(u)=-\lambda(v)$, and therefore
  $\mu(u)+\mu(v) \neq 0$ for every edge $\{u,v\} \in K$.
  Hence $\mu$ satisfies the hypothesis of Theorem~\ref{thm:fan}.
  Therefore, there is an odd number of positive alternating $n$-simplices,
  i.e., simplices labelled $\{1,-2,\ldots,(-1)^n n,(-1)^{n+1}(n+1)\}$ by $\mu$.
  Hence, there is an odd number of simplices labelled $\{1,2,\ldots,n+1\}$
  by $\lambda$.
  This contradicts the assumption that every $n$-simplex in $K$ has vertices of both
  signs. Hence, there exists an edge $\{u,v\} \in K$ such that $\lambda(u)+\lambda(v)=0$.
\end{proof}

We are now ready to prove Theorem~\ref{thm:stiebitz}.

\begin{proof}[Proof of Theorem~\ref{thm:stiebitz}]
  The case $k=2$ ($G = K_2$) and $k=3$ ($G$ is an odd cylce) are trivial,
  so assume $k > 3$ and let $G \in \mathcal M_k$.
  The graph $G$ is obtained from an odd cycle by $k-3$ iterations of $M_r(\cdot)$,
  where the value of $r$ can vary from iteration to iteration.
  By repeated applications of Theorem~\ref{thm:mycielski-projective}
  ($k-3$ applications to be exact), there exists a symmetric triangulation $K$ of
  $S^{k-2}$ aligned with hemispheres, and a proper antisymmetric $2$-colouring
  $\kappa$ such that $G \cong G(K, \kappa)$.
  (To see this, observe that $M_r(K_2)$ is isomorphic to the odd cycle $C_{2r+1}$, which is
  isomorphic to $G(K,\kappa)$, where $K$ is a symmetric triangulation of $S^1$---%
  i.e., a graph---isomorphic to the cycle $C_{4r+2}$, and $\kappa$ is a proper
  $2$-colouring of $K$. By choosing any pair of antipodal vertices of $K$ to
  be the hemispheres $H_0$ and $-H_0$, it is clear that $K$ is aligned with
  hemispheres.)
  Let us say the colours used in $\kappa$ are black and white.
  
  Consider any (not necessarily proper) $(k-1)$-colouring $c:V(G) \to \{1,\ldots,k-1\}$.
  By setting
  \[
    \lambda(v)=
    \begin{cases}
      +c(p(v)) &\text{ if $v$ is black}\\
      -c(p(v)) &\text{ if $v$ is white},
    \end{cases}
  \]
  we obtain an antisymmetric labelling $\lambda:V(K) \to \{\pm 1, \ldots, \pm(k-1)\}$
  such that every $(k-2)$-simplex has vertices of both signs.
  By Corollary~\ref{cor:fan}, there exists an edge $\{u,v\} \in K$ such that
  $\lambda(u)+\lambda(v) = 0$. Hence, the edge $\{p(u),p(v)\} \in E(G)$ satisfies
  $c(p(u))=|\lambda(u)|=|\lambda(v)|=c(p(v))$, i.e., $c$ is not a proper colouring of $G$.
  This shows that $\chi(G) \geq k$.
\end{proof}

\section{Equivalence of the theorems of Borsuk--Ulam and Stiebitz}

Let us recall the following construction due to Erd\H os and Hajnal~\cite{EH67}.
The \emph{Borsuk graph} $BG(n,\alpha)$ is defined as the (infinite) graph
whose vertices are the points of $\mathbb R^{n+1}$ on $S^n$, and the edges
connect points at Euclidean distance at least $\alpha$, where $0<\alpha<2$.
Using Theorem~\ref{thm:borsuk-ulam}, it can be shown that $\chi(G) \geq n+2$ (in fact the two
statements are equivalent, as noted by Lov\'asz~\cite{Lov78}). Furthermore, by using the
standard $(n+2)$-colouring of $S^n$ based on the central projection of a
regular $(n+1)$-simplex, it can be shown that $BG(n,\alpha)$ is $(n+2)$-chromatic
for all $\alpha$ sufficiently large. In particular, Simonyi and
Tardos~\cite{ST06} have shown that $BG(n,\alpha)$ is $(n+2)$-chromatic
for all $\alpha \geq \alpha_0$, where $\alpha_0=2 \sqrt{1-1/(n+3)}$.

We will need the following lemma.

\begin{lemma}
\label{lem:BG}
  For every $n \geq 0$ and every $\delta > 0$, there exists $G \in \mathcal M_{n+2}$
  and a mapping $f:V(G) \to S^n$ such that $\|f(u)+f(v)\| < \delta$,
  for every edge $\{u,v\} \in G$. In particular, $G \subset BG(n,\sqrt{4-\delta^2})$.
\end{lemma}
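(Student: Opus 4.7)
The plan is induction on $n$. For $n=0$, $\mathcal M_2 = \{K_2\}$, so I send its two vertices to $\pm 1 \in S^0$; the sole edge then satisfies $f(u)+f(v)=0$, which is below any prescribed $\delta>0$.

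For the inductive step I pick $\delta' \in (0,\delta)$ (say $\delta'=\delta/2$), apply the inductive hypothesis to obtain $G_n \in \mathcal M_{n+2}$ and $f : V(G_n) \to S^n$ with $\|f(u)+f(v)\|<\delta'$ on every edge of $G_n$, and set $G_{n+1} := M_r(G_n) \in \mathcal M_{n+3}$ for an $r$ to be chosen. Viewing $S^{n+1} \subset \RR^{n+2}$ via suspension coordinates, I let $\theta_i := i\pi/(2(r-1))$ and define
\[
 f'\bigl((u,i)\bigr) = \bigl(\cos\theta_i \cdot f(u),\, (-1)^i \sin\theta_i\bigr), \qquad f'(z) = \bigl(0,\ldots,0,(-1)^r\bigr).
\]
The sign $(-1)^i$ in the last coordinate is the essential device: consecutive levels must be near-antipodal because the slanted edges of $M_r(G_n)$ go from level $i$ to level $i+1$, so the final coordinate needs to flip along each such edge.

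I then verify each edge type separately. For horizontal edges $\{(u,0),(v,0)\}$ the sum $f'((u,0))+f'((v,0))=(f(u)+f(v),0)$ has norm $<\delta'$ by hypothesis; the apex edges $\{(u,r-1),z\}$ are exactly antipodal since $\cos(\pi/2)=0$. For a slanted edge $\{(u,i),(v,i+1)\}$ I decompose
\[
 \cos\theta_i\, f(u) + \cos\theta_{i+1}\, f(v) = \cos\theta_i\bigl(f(u)+f(v)\bigr) + \bigl(\cos\theta_{i+1}-\cos\theta_i\bigr) f(v),
\]
and use the Lipschitz bounds $|\sin\theta_{i+1}-\sin\theta_i|,\,|\cos\theta_{i+1}-\cos\theta_i| \leq \eps$ with $\eps := \pi/(2(r-1))$ to conclude $\|f'((u,i))+f'((v,i+1))\|^2 \leq (\delta'+\eps)^2+\eps^2$. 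Choosing $r$ large enough that $\eps<\delta/4$ forces this quantity below $\delta^2$. The "in particular" clause follows from the identity $\|x-y\|^2 + \|x+y\|^2 = 4$ on $S^n$.

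The only real obstacle is designing the sign pattern on the last coordinate so that slanted edges pair up to near-antipodal points while horizontal and apex edges are also handled. The alternation $(-1)^i\sin\theta_i$ does both at once: at level $0$ it vanishes, so horizontal edges reduce cleanly to $f(u)+f(v)$ in the $S^n$ factor; at level $r-1$ it equals $\pm 1$ and matches $f'(z)$ exactly; and in between its flipping sign absorbs the slanted edge's vertical displacement. Once this parity is in place, the remaining estimates are routine trigonometric Lipschitz bounds.
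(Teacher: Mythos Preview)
Your proof is correct and follows essentially the same suspension-coordinate construction as the paper: induct on $n$, and at the inductive step embed $M_r(G)$ in $S^{n+1}$ via $(v,i)\mapsto(\cos\theta_i\,f(v),\,(-1)^i\sin\theta_i)$, with exactly the same alternating sign on the last coordinate and the same triangle-inequality decomposition for the slanted edges. The only difference is your choice $\theta_i=i\pi/(2(r-1))$ in place of the paper's $i\pi/(2r)$, which makes the apex edges \emph{exactly} antipodal but collapses the whole top level $V(G)\times\{r-1\}$ to a single point---harmless for the main inequality, though it means $f'$ is not injective, so the ``in particular'' clause should strictly be read as a homomorphism into $BG(n,\sqrt{4-\delta^2})$ rather than a literal subgraph inclusion.
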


\begin{proof}
  The proof is by induction on $n$. We take $n=1$ as the base case, but we
  remark that the statement is also true for $n=0$, because $K_2$ is the only
  graph in $\mathcal M_2$, and the two vertices $u,v$ of $K_2$ can be placed at
  antipodal points of $S^0$, so $\|f(u)+f(v)\|=0$.
  
  To see that the statement is true for $n=1$, observe that $\mathcal M_3$
  is the family of odd cycles. The vertices of $C_{2r+1}$ can be
  mapped to $S^1$ so that $f(u)$ and $-f(v)$  are at angular distance
  $\pi/(2r+1)$, for every edge $\{u,v\} \in E(G)$. Therefore,
  $\|f(u)+f(v)\|=2\sin(\pi/(4r+2))$. As $r$ tends to infinity,
  $\|f(u)+f(v)\|$ tends to zero; in particular, for every $\delta>0$ there exists $r$
  such that $\|f(u)+f(v)\| < \delta$ for every $\{u,v\} \in E(C_{2r+1})$.
  
  Now suppose the theorem is true for $n \geq 1$. Fix a real number $\delta>0$.
  By the induction hypothesis, there exists $G \in \mathcal M_{n+2}$ and a
  mapping $f:V(G) \to S^n$ such that $\|f(u)+f(v)\| < \delta/2$ for every $\{u,v\} \in E(G)$.
  We let $r \geq 2$ be a large integer, to be specified shortly in the proof, and we
  define a mapping $\bar f:V(M_r(G)) \to S^{n+1}$ by setting:
  \begin{align*}
    \bar f(z)& :=(0, \ldots, 0,(-1)^r), \\
    \bar f((v,i)) & :=\left(f(v) \cos(\pi i/2r), (-1)^i\sin(\pi i/2r)\right).
  \end{align*}
  
  Fix an arbitrary edge $\{x,y\} \in E(M_r(G))$. We will show that $\|\bar f(x)+\bar f(y)\|<\delta$.
  First, if $x=(u,0)$ and $y=(v,0)$, for some $\{u,v\} \in E(G)$, then clearly
  $\bar f(x)=(f(u),0)$ and $\bar f(y)=(f(v),0)$, so $\|\bar f(x)+\bar f(y)\|=\|f(u)+f(v)\|<\delta/2$.
  
  Second, if $\{x,y\}=\{(u,i),(v,i+1)\}$, for some $\{u,v\} \in E(G)$, then
  applying the triangle inequality (twice) we get:
  \begin{align*}
    \|\bar f(x)+\bar f(y)\|
    &\leq \big\|f(u)\cos(\pi i/2r)+f(v)\cos(\pi (i+1)/2r)\big\|\\
    &\quad+|\sin(\pi i/2r)-\sin(\pi(i+1)/2r)| \\
    &\leq \|f(u)+f(v)\|\cdot|\cos(\pi i/2r)|\\
    &\quad+\|f(v)\|\cdot|\cos(\pi i/2r)-\cos(\pi (i+1)/2r)|\\
    &\quad+|\sin(\pi i/2r)-\sin(\pi(i+1)/2r)|\\
    & \leq \delta/2 + |\cos(t)-\cos(t + \eps)|+|\sin(t)-\sin(t + \eps)|,
  \end{align*}
  where $t = \pi i/2r, \eps = \pi/2r$. Since $\sin$ and $\cos$ are uniformly continuous,
  having chosen $r$ sufficiently large, we can assume that
  $|\cos(t)-\cos(t+\eps)|, |\sin(t)-\sin(t+\eps)| < \delta/4$ (for all
  $t\in \RR$ in fact). So $\|f(x)+f(y)\| < \delta$ as required.
  
  Finally, if $\{x,y\}=\{(u,r-1),z\}$, then we have 
  \begin{align*} 
  \|\bar f(x) + \bar f(y)\| &= \sqrt{\cos^2( \pi(r-1)/2r) + (1 - \sin(\pi(r-1)/2r)))^2} \\
  &< \delta,
  \end{align*}
  where the inequality holds provided $r$ was chosen sufficiently large, using that 
  $\cos( \pi(r-1)/2r )$ approaches $\cos(\pi/2) = 0$ and $\sin(\pi(r-1)/2r))$ approaches
  $\sin(\pi/2)=1$ as $r$ tends to infinity.
  
  Thus, we have now shown that, provided $r$ was chosen sufficiently large, 
  for every $\{x,y\} \in E(M_r(G))$ we have $\|\bar f(x)+\bar f(y)\|<\delta$.
  The lemma follows by induction.
\end{proof}

We will now show how Theorem~\ref{thm:borsuk-ulam} can be deduced from
Theorem~\ref{thm:stiebitz} and Lemma~\ref{lem:BG}.

\begin{proof}[Proof of Theorem~\ref{thm:borsuk-ulam}]
  Suppose there exists a continuous antipodal map $f:S^n\to S^{n-1}$.
  Set $\varepsilon=1/\sqrt{n+2}$.
  Since every continuous function on a compact set is uniformly continuous, there
  exists $\delta > 0$ such that if $\|x-y\|< \delta$, then
  $\|f(x)-f(y)\| < 2\varepsilon$.
  
  By Lemma~\ref{lem:BG}, there exists $G \in \mathcal M_{n+2}$ and
  a mapping $g:V(G) \to S^n$ such that $\|g(u)+g(v)\| < \delta$,
  for every edge $\{u,v\} \in E(G)$. Therefore, the mapping $f \circ g:V(G) \to S^{n-1}$
  satisfies $\|f(g(u))+f(g(v))\| < 2\varepsilon$, for every edge $\{u,v\} \in G$.
  Therefore, the Euclidean distance between $f(g(u))$ and $f(g(v))$ is
  \[
    \|f(g(u))-f(g(v))\|>2\sqrt{1-\varepsilon^2}=2\sqrt{1-1/(n+2)},
  \]
  so $G \subset BG(n-1,\alpha_0)$, and thus $\chi(G)\leq BG(n-1,\alpha_0)=n+1$.
  On the other hand, we have $\chi(G) \geq n+2$ by Theorem~\ref{thm:stiebitz}.
  This contradiction proves that there is no continuous antipodal map
  $f:S^n\to S^{n-1}$.
\end{proof}

\bibliographystyle{plain}
\bibliography{mycielski}

\def\soft#1{\leavevmode\setbox0=\hbox{h}\dimen7=\ht0\advance \dimen7
  by-1ex\relax\if t#1\relax\rlap{\raise.6\dimen7
  \hbox{\kern.3ex\char'47}}#1\relax\else\if T#1\relax
  \rlap{\raise.5\dimen7\hbox{\kern1.3ex\char'47}}#1\relax \else\if
  d#1\relax\rlap{\raise.5\dimen7\hbox{\kern.9ex \char'47}}#1\relax\else\if
  D#1\relax\rlap{\raise.5\dimen7 \hbox{\kern1.4ex\char'47}}#1\relax\else\if
  l#1\relax \rlap{\raise.5\dimen7\hbox{\kern.4ex\char'47}}#1\relax \else\if
  L#1\relax\rlap{\raise.5\dimen7\hbox{\kern.7ex
  \char'47}}#1\relax\else\message{accent \string\soft \space #1 not
  defined!}#1\relax\fi\fi\fi\fi\fi\fi}
\begin{thebibliography}{10}

\bibitem{BM08}
J.~A. Bondy and U.~S.~R. Murty.
\newblock {\em Graph Theory}, volume 244 of {\em Graduate Texts in
  Mathematics}.
\newblock Springer, New York, 2008.

\bibitem{Bor33}
K.~Borsuk.
\newblock Drei {S}\"{a}tze \"{u}ber die $n$-dimensionale euklidische
  {S}ph\"{a}re.
\newblock {\em Fund. Math.}, 20:177--190, 1933.

\bibitem{EH67}
P.~Erd{\H{o}}s and A.~Hajnal.
\newblock On chromatic graphs.
\newblock {\em Mat. Lapok}, 18:1--4, 1967.

\bibitem{Fan52}
K.~Fan.
\newblock A generalization of {T}ucker's combinatorial lemma with topological
  applications.
\newblock {\em Ann. of Math. (2)}, 56:431--437, 1952.

\bibitem{GJS04}
A.~Gy{\'a}rf{\'a}s, T.~Jensen, and M.~Stiebitz.
\newblock On graphs with strongly independent color-classes.
\newblock {\em J. Graph Theory}, 46(1):1--14, 2004.

\bibitem{KS15}
T.~Kaiser and M.~Stehl\'{i}k.
\newblock Colouring quadrangulations of projective spaces.
\newblock {\em J. Combin. Theory Ser. B}, 113:1--17, 2015.

\bibitem{Lov78}
L.~Lov{\'a}sz.
\newblock Kneser's conjecture, chromatic number, and homotopy.
\newblock {\em J. Combin. Theory Ser. A}, 25(3):319--324, 1978.

\bibitem{Mat03}
J.~Matou{\v{s}}ek.
\newblock {\em Using the {B}orsuk-{U}lam theorem}.
\newblock Universitext. Springer-Verlag, Berlin, 2003.

\bibitem{Mat04}
J.~Matou{\v{s}}ek.
\newblock A combinatorial proof of {K}neser's conjecture.
\newblock {\em Combinatorica}, 24(1):163--170, 2004.

\bibitem{Myc55}
J.~Mycielski.
\newblock Sur le coloriage des graphes.
\newblock {\em Colloq. Math.}, 3:161--162, 1955.

\bibitem{PS05}
T.~Prescott and F.~E. Su.
\newblock A constructive proof of {K}y {F}an's generalization of {T}ucker's
  lemma.
\newblock {\em J. Combin. Theory Ser. A}, 111(2):257--265, 2005.

\bibitem{SS89}
H.~Sachs and M.~Stiebitz.
\newblock On constructive methods in the theory of colour-critical graphs.
\newblock {\em Discrete Math.}, 74(1-2):201--226, 1989.
\newblock Graph colouring and variations.

\bibitem{Sch78}
A.~Schrijver.
\newblock Vertex-critical subgraphs of {K}neser graphs.
\newblock {\em Nieuw Arch. Wisk. (3)}, 26(3):454--461, 1978.

\bibitem{ST06}
G.~Simonyi and G.~Tardos.
\newblock Local chromatic number, {K}y {F}an's theorem and circular colorings.
\newblock {\em Combinatorica}, 26(5):587--626, 2006.

\bibitem{Sti85}
M.~Stiebitz.
\newblock {\em Beitr\"{a}ge zur {T}heorie der f\"{a}rbungskritischen
  {G}raphen}.
\newblock Habilitation thesis, Technische Hochschule Ilmenau, 1985.

\bibitem{Van87}
N.~Van~Ngoc.
\newblock {\em On graph colourings (Hungarian)}.
\newblock PhD thesis, Hungarian Academy of Sciences, 1987.

\bibitem{VNT95}
N.~Van~Ngoc and Zs. Tuza.
\newblock {$4$}-chromatic graphs with large odd girth.
\newblock {\em Discrete Math.}, 138(1-3):387--392, 1995.
\newblock 14th British Combinatorial Conference (Keele, 1993).

\end{thebibliography}
\end{document}